\theoremstyle{thmstyleone}%
\newtheorem{theorem}{Theorem}
\theoremstyle{thmstyletwo}%
\newtheorem{example}{Example}%
\newtheorem{lemma}{Lemma}%
\theoremstyle{thmstylethree}%
\newtheorem{definition}{Definition}%
\begin{document}

\title[Article Title]{The Impact of Data Dependence, Convergence and Stability by $AT$ Iterative Algorithms}


\author*[1,2]{\fnm{Akansha} \sur{ Tyagi}}\email{akanshatyagi0107@gmail.com ,atyagi1@maths.du.ac.in}

\author[2,3]{\fnm{Sachin} \sur{Vashistha}}\email{sachin.vashistha1@gmail.com}
\equalcont{These authors contributed equally to this work.}

\equalcont{These authors contributed equally to this work.}

\affil*[1]{\orgdiv{Department of Mathematics}, \orgname{University of Delhi}, \orgaddress{\street{Faculty of Mathematical Sciences Guru Tegh Bahadur}, \city{ Delhi}, \postcode{110007}, \state{ Delhi}, \country{India}}}

\affil[2]{\orgdiv{Department of Mathematics}, \orgname{Hindu College}, \orgaddress{\street{University of Delhi}, \city{Delhi}, \postcode{110007}, \state{Delhi}, \country{India}}}


\abstract{This article aims to present the $AT$ algorithm, a novel two-step iterative approach for approximating fixed points of weak contractions within complete normed linear spaces. The article demonstrates the convergence of  $AT$ algorithm towards fixed points of weak contractions. Notably, it establishes the algorithm's strong convergence properties, highlighting its faster convergence compared to established iterative methods such as $S$, normal-$S$, Varat, Mann, Ishikawa, $F^{*}$, and Picard algorithms. Additionally, the study explores the $AT$ algorithm's almost stable behavior for weak contractions. Emphasizing practical applicability, the paper offers data-dependent results through the $AT$ algorithm and substantiates findings with illustrative numerical examples}

\keywords{$AT$ iterative algorithm, Weak contraction, Fixed points, Numerically stable, Data dependence}



\maketitle
\section{Introduction and preliminaries}

In this paper, our foundational assumptions include considering $\mathbb{Z}_{+}$ as the collection of nonnegative integers. Additionally, we consider $P$ as a nonempty, closed and convex subset within a complete normed linear space $Q$. 
	Fixed point theory stands as a foundational and versatile framework in mathematics, offering powerful tools to study the existence and properties of solutions across diverse mathematical structures.Approximating the fixed points of both linear and nonlinear mappings through iterative methods stands as a cornerstone in fixed point theory,\\

	Beginning with foundational techniques like the Picard iteration \cite{bib1} which establishes convergence towards fixed points of contraction mappings to more intricate methodologies including Krasnoselskii\cite{bib2}, Mann\cite{bib3}, Ishikawa\cite{bib4},  S \cite{bib5}, normal-S \cite{bib6}, Varat\cite{bib7} , $F^{*}$\cite{bib8}  iterative algorithms constitute the backbone of computational approaches in fixed point theory for the self-mapping $R$ defined on $P$.\\

\begin{align}
	& \left\{
	\begin{array}{l}
		s_{0} \in P \\
		s_{m+1}= R s_{m}, \quad m \in \mathbb{Z}_{+}
	\end{array}
	\right. \label{eq:eq1} \\
	& \left\{
	\begin{array}{l}
		s_{0} \in P \\
		s_{m+1}=\left(1-a_{m}\right) s_{m}+a_{m} R s_{m}, \quad m \in \mathbb{Z}_{+}
	\end{array}
	\right. \label{eq:eq2} \\
	& \left\{
	\begin{array}{l}
		s_{0} \in P \\
		s_{m+1}=\left(1-a_{m}\right) s_{m}+a_{m} R b_{m} \\
		b_{m}=\left(1-d_{m}\right) s_{m}+d_{m} R s_{m}, \quad m \in \mathbb{Z}_{+}
	\end{array}
	\right. \label{eq:eq3} \\
	& \left\{
	\begin{array}{l}
		s_{0} \in P \\
		s_{m+1}=\left(1-a_{m}\right) R s_{m}+a_{m} R b_{m} \\
		b_{m}=\left(1-d_{m}\right) s_{m}+d_{m} R s_{m}, \quad m \in \mathbb{Z}_{+}
	\end{array}
	\right. \label{eq:eq4} \\
	& \left\{
	\begin{array}{l}
		s_{0} \in P \\
		s_{m+1}=R\left(\left(1-a_{m}\right) s_{m}+a_{m} R s_{m}\right), \quad m \in \mathbb{Z}_{+}
	\end{array}
	\right. \label{eq:eq5} \\
	& \left\{
	\begin{array}{l}
		s_{0} \in P \\
		s_{m+1}=\left(1-a_{m}\right) R t_{m}+a_{m} R b_{m} \\
		t_{m}=\left(1-c_{m}\right) s_{m}+c_{m} b_{m} \\
		b_{m}=\left(1-d_{m}\right) s_{m}+d_{m} R s_{m}, \quad m \in \mathbb{Z}_{+}
	\end{array}
	\right. \label{eq:eq6} \\
	& \left\{
	\begin{array}{l}
		s_{0} \in P \\
		s_{m+1}=R b_{m} \\
		b_{m}=R\left(\left(1-a_{m}\right) s_{m}+a_{m} R s_{m}\right), \quad m \in \mathbb{Z}_{+}
	\end{array}
	\right. \label{eq:eq7}
\end{align}

where  $a_{m}$, $c_{m}$, $d_{m}$  are sequences in  (0,1).\\
The iterations mentioned above \eqref{eq:eq1} , \eqref{eq:eq2}, \eqref{eq:eq3}, \eqref{eq:eq4}, \eqref{eq:eq5}, \eqref{eq:eq6}, \eqref{eq:eq7} have been proposed by distinguished Researchers.

Given  by above information, one  question arrive:\\
\textbf{Question} : Can we formulate a two-step iterative algorithm that converges  faster   by the $F^{*}$ iteration \eqref{eq:eq7} and other existing iterative methods ?\\
We introduce a novel iterative algorithm comprising two steps  called $AT$ algorithm, which is defined as follows:-\\
A complete normed linear space $Q$ has a nonempty closed and convex subset $P$. Given a self-mapping $R$ on $P$,
the sequence $\left\{s_{m}\right\}$ is defined by:

\begin{align}
	& \left\{
	\begin{aligned}
		& s_{0} \in P \\
		& s_{m+1} = R\left((1-a_m) b_m + a_m R b_m\right) \\
		& b_{m} = \frac{1}{2} R^2(s_m) + \frac{1}{2} R^2((1 - a_m) s_m + a_m R s_m), \quad m \in \mathbb{Z}_{+}
	\end{aligned}
	\right. \label{eq:eq8}
\end{align}

where $a_{m}$ is a sequence in $(0,1)$.\\

\begin{definition}\cite{bib16} A mapping $R: Q \rightarrow Q$ is considered to be $\zeta$-contraction with existing a constant $\zeta \in [0,1)$  such that:
\begin{equation}
\begin{aligned}
	\|R p - R q\| \leq \zeta \|p - q\|, \quad \forall p, q \in Q.\label{eq:eq9}
\end{aligned}
\end{equation}
\end{definition}

\begin{definition} \cite{bib9}  On a complete normed linear  space $Q$  a mapping $R: Q \rightarrow Q$ is considered to  as weak contraction with existing constant $\zeta \in(0,1)$ and some constant $L \geq 0$, such that:
\begin{equation}
	\|R p-R q\| \leq \zeta\|p-q\|+L\|q-R p\|, \quad \forall p, q \in Q 
	\label{eq:equation10}
\end{equation}
\end{definition}

\begin{theorem} \cite{bib9} On a complete normed linear  space $Q$  a mapping $R: Q \rightarrow Q$ with\\
condition \eqref{eq:equation10} and:

\begin{equation}
	\|R p - R q\| \leq \zeta \|p - q\| + L \|p - R p\|, \quad \forall p, q \in Q
	\label{eq:equation11}
\end{equation}

Consequently, there is a single fixed point for the mapping $R$ in $Q$.
\end{theorem}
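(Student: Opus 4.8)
The plan is to establish existence through the Picard iteration and uniqueness through a direct comparison of two hypothetical fixed points, exploiting the asymmetry between the two contractive conditions \eqref{eq:equation10} and \eqref{eq:equation11}. I would begin by fixing an arbitrary $s_0 \in Q$ and defining the Picard sequence $s_{m+1} = R s_m$ for $m \in \mathbb{Z}_+$. The central observation is that condition \eqref{eq:equation10} is perfectly suited to controlling consecutive differences: choosing $p = s_{m-1}$ and $q = s_m$ makes the perturbation term $L\|q - Rp\| = L\|s_m - R s_{m-1}\| = L\|s_m - s_m\|$ vanish identically, so that $\|s_{m+1} - s_m\| = \|R s_m - R s_{m-1}\| \leq \zeta\|s_m - s_{m-1}\|$. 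Iterating this bound yields the geometric estimate $\|s_{m+1} - s_m\| \leq \zeta^m \|s_1 - s_0\|$.

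From this estimate I would deduce that $\{s_m\}$ is Cauchy by the standard telescoping argument: for $n > m$ the triangle inequality gives $\|s_n - s_m\| \leq \sum_{k=m}^{n-1}\zeta^k \|s_1 - s_0\| \leq \frac{\zeta^m}{1-\zeta}\|s_1 - s_0\|$, and since $\zeta \in (0,1)$ the tail tends to zero. Completeness of $Q$ then supplies a limit $s^* \in Q$. To verify that $s^*$ is fixed, I would bound $\|s^* - R s^*\| \leq \|s^* - s_{m+1}\| + \|R s_m - R s^*\|$ and apply \eqref{eq:equation10} with $p = s_m$, $q = s^*$ to obtain $\|R s_m - R s^*\| \leq \zeta\|s_m - s^*\| + L\|s^* - s_{m+1}\|$; letting $m \to \infty$ drives every term on the right to zero, forcing $R s^* = s^*$.

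For uniqueness I would switch to condition \eqref{eq:equation11}, which is the companion tailored to fixed points. If $s^*$ and $t^*$ were both fixed, selecting $p = s^*$ and $q = t^*$ annihilates the term $L\|p - Rp\| = L\|s^* - R s^*\| = 0$, leaving $\|s^* - t^*\| = \|R s^* - R t^*\| \leq \zeta\|s^* - t^*\|$. Since $\zeta < 1$, this forces $\|s^* - t^*\| = 0$, hence $s^* = t^*$.

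I expect the only delicate point—more a matter of bookkeeping than a genuine obstacle—to be recognizing \emph{which} hypothesis to invoke at each stage so that its perturbation term vanishes: condition \eqref{eq:equation10} in the regime $q = Rp$ drives the Cauchy estimate and the fixed-point verification, while condition \eqref{eq:equation11} with $p$ a fixed point drives uniqueness. Once these pairings are in place, the remainder is the routine Banach-type machinery of summing a geometric series and passing to the limit.
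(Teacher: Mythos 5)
Your argument is correct: condition \eqref{eq:equation10} with $q=Rp$ kills the perturbation term and yields the geometric Picard estimate and the fixed-point verification, while condition \eqref{eq:equation11} with $p$ a fixed point yields uniqueness. The paper itself states this theorem without proof (it is quoted from Berinde's work on weak contractions), and your proof is precisely the standard argument given in that reference, so there is nothing to add.
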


\begin{definition} \cite{bib10}  Let  $s_{0} \in Q$ and $s_{m+1}=g\left(R, s_{m}\right)$ is defined an iterative method for a function $g$ on a complete normed linear space $Q$  with self-map $R$ having fixed point $s$. Let $\left\{r_{m}\right\}$ be sequence of an approximation of $\left\{s_{m}\right\}$ in $Q$ and define $\gamma_{m}=\| r_{m+1}-$ $g\left(R, r_{m}\right) \|$. Then iterative method $s_{m+1}=g\left(R, s_{m}\right)$ is known as $R$-stable if:

$$
\lim _{m \rightarrow \infty} \gamma_{m}=0 \Longleftrightarrow \lim _{m \rightarrow \infty} r_{m}=s
$$
\end{definition}

\begin{definition}  \cite{bib10} Let $s_{0} \in Q$ and $s_{m+1}=g\left(R, s_{m}\right), m \in \mathbb{Z}_{+}$is an iterative method for a function $g$  on a complete normed linear space $Q$  with self-map $R$ having fixed point $s$. Let $\left\{r_{m}\right\}$ be an sequence of approximate of $\left\{s_{m}\right\}$ in $Q$ and define $\gamma_{m}=\left\|r_{m+1}-g\left(R, r_{m}\right)\right\|$. Then iterative method $s_{m+1}=g\left(R, s_{m}\right)$ is known as almost $R$-stable if:

$$
\sum_{m=0}^{\infty} \gamma_{m}<\infty \Longrightarrow \lim _{m \rightarrow \infty} r_{m}=s
$$
\end{definition}
\begin{lemma}
\cite{bib11} Let $\left\{u_{m}\right\}$ and $\left\{v_{m}\right\}$ be two sequences in $\mathbb{R}_{+}$ and $0 \leq s<1$ so that $u_{m+1} \leq s u_{m}+v_{m} 
\forall m \geq 0$.

(i) If $\lim _{m \rightarrow \infty} v_{m}=0$ implies that $\lim _{m \rightarrow \infty} u_{m}=0$.
\end{lemma}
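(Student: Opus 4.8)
The plan is to prove this by a $\limsup$ argument, which sidesteps explicitly summing the recursion. First I would record that $\{u_m\}$ is bounded: since $v_m \to 0$ the sequence $\{v_m\}$ is bounded, say $v_m \le M$ for all $m$, and then a one-line induction on $u_{m+1} \le s u_m + v_m$ yields $u_m \le s^m u_0 + M/(1-s)$. Hence $L := \limsup_{m\to\infty} u_m$ is finite, and it is nonnegative because every $u_m \ge 0$ forces $\liminf_{m\to\infty} u_m \ge 0$.

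Next I would pass to the limit superior through the inequality. Using subadditivity of $\limsup$ together with the fact that shifting the index does not change the value, $\limsup_{m\to\infty} u_{m+1} = \limsup_{m\to\infty} u_m = L$, I obtain
\[
L = \limsup_{m\to\infty} u_{m+1} \le s\,\limsup_{m\to\infty} u_m + \limsup_{m\to\infty} v_m = sL + 0,
\]
since $v_m \to 0$ gives $\limsup_{m\to\infty} v_m = 0$. Therefore $(1-s)L \le 0$, and because $0 \le s < 1$ and $L \ge 0$, this forces $L = 0$. Combined with $\liminf_{m\to\infty} u_m \ge 0$, we conclude $\lim_{m\to\infty} u_m = 0$, as required.

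The step demanding the most care is the passage to $\limsup$: one must justify that the limit superior of a sum is dominated by the sum of the individual limits superior and that the index shift is harmless, both of which rely on the boundedness established first in order to avoid an indeterminate $\infty - \infty$ situation. As an alternative, more computational route I would instead unroll the recursion to $u_{m+1} \le s^{m+1} u_0 + \sum_{k=0}^{m} s^{m-k} v_k$ and show the convolution sum tends to $0$ by fixing $\varepsilon > 0$, choosing $N$ so that $v_k$ is small for $k \ge N$, and splitting the sum at $N$: the tail is controlled by the geometric bound $\sum_{j \ge 0} s^j = 1/(1-s)$, while the head vanishes because $s^{m} \to 0$ multiplies a fixed finite sum. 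This makes the smallness estimate fully explicit but is otherwise equivalent.
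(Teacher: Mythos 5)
Your proof is correct. Note that the paper itself offers no argument for this lemma at all: it is quoted verbatim from Berinde \cite{bib11} as a known auxiliary result, so there is no in-paper proof to compare against. Your primary route via $\limsup$ is sound: you correctly recognise that the a priori boundedness of $\{u_m\}$ must be established first, since otherwise the inequality $L \leq sL + 0$ degenerates to $\infty \leq \infty$ and the subadditivity of $\limsup$ risks an indeterminate form; with $L$ finite and nonnegative, $(1-s)L \leq 0$ indeed forces $L = 0$, and nonnegativity of the terms closes the squeeze. Your alternative route --- unrolling to $u_{m+1} \leq s^{m+1}u_0 + \sum_{k=0}^{m} s^{m-k}v_k$ and splitting the convolution sum at a threshold $N$ beyond which $v_k < \varepsilon(1-s)$ --- is the argument that appears in the cited source and is the more standard textbook proof of this Cauchy--Toeplitz-type statement. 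The $\limsup$ version buys brevity and avoids the explicit $\varepsilon$--$N$ bookkeeping at the cost of needing the boundedness preamble; the unrolled version is fully quantitative and would also let you read off a rate if $v_m$ decays geometrically. Either one is acceptable here.
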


\begin{lemma} \cite{bib12} With existing $N \in \mathbb{Z}_{+}$ define a sequence $\left\{p_{m}\right\}$ in $\mathbb{R}_{+}$  so that for all $m \geq N$ satisfying the following inequality:

$$
p_{m+1} \leq \left(1-\delta_{m}\right) p_{m} + \delta_{m} q_{m}
$$

where $\delta_{m} \in (0,1)$ for all $m \in \mathbb{Z}_{+}$ such that $\sum_{m=0}^{\infty} \delta_{m} = \infty$ and $q_{m} \geq 0$. Then:

$$
0 \leq \lim_{m \rightarrow \infty} \sup p_{m} \leq \lim_{m \rightarrow \infty} \sup q_{m}.
$$
\end{lemma}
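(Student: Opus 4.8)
The plan is to reduce the recursive inequality to the behaviour of a scalar linear recursion obtained by freezing $q_m$ at its eventual supremum. Write $L = \limsup_{m\to\infty} q_m$. If $L = +\infty$ the bound $\limsup_m p_m \le L$ is vacuous, and since every $p_m \ge 0$ the lower estimate $\limsup_m p_m \ge 0$ is automatic; so I would assume $L < \infty$. Fixing an arbitrary $\varepsilon > 0$, the definition of $\limsup$ supplies an index $M \ge N$ with $q_m \le L + \varepsilon$ for all $m \ge M$. Feeding this into the hypothesis gives, for every $m \ge M$, the cleaner inequality $p_{m+1} \le (1-\delta_m) p_m + \delta_m (L+\varepsilon)$.

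The central step is to unroll this one-term recursion. Setting $\Pi_m = \prod_{k=M}^{m}(1-\delta_k)$ and iterating from $M$ up to $m$, I would establish (by induction, or via the telescoping identity $\delta_j \prod_{k=j+1}^m (1-\delta_k) = \prod_{k=j+1}^m(1-\delta_k) - \prod_{k=j}^m(1-\delta_k)$) the closed bound
$$ p_{m+1} \le \Pi_m\, p_M + (L+\varepsilon)\,(1 - \Pi_m), \qquad m \ge M. $$
Here I use that each factor $1-\delta_k \in (0,1)$ is strictly positive, so that multiplying through preserves the direction of the inequalities and the induction proceeds without any sign complications.

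To finish I would invoke the divergence of $\sum_m \delta_m$. Because $1 - x \le e^{-x}$ for $x \in [0,1)$, we have $0 \le \Pi_m \le \exp\!\big(-\sum_{k=M}^m \delta_k\big)$, and this tends to $0$ as $m\to\infty$ precisely because $\sum_{m}\delta_m = \infty$. Passing to the $\limsup$ in the closed bound then yields $\limsup_{m\to\infty} p_m \le L + \varepsilon$. As $\varepsilon>0$ was arbitrary, $\limsup_{m\to\infty} p_m \le L = \limsup_{m\to\infty} q_m$, while the assumption $p_m \ge 0$ delivers $0 \le \limsup_{m\to\infty} p_m$, completing the two-sided estimate.

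I expect the main obstacle to be the bookkeeping in the unrolling step: getting the telescoping product identity exactly right and confirming that the positivity of the factors $(1-\delta_k)$ keeps the induction clean. The only other ingredient is the standard but essential fact that $\sum_m \delta_m = \infty$ forces $\prod_k (1-\delta_k) \to 0$, which the exponential bound above makes routine.
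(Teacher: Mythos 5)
Your argument is correct: the reduction to $q_m \le L+\varepsilon$ for large $m$, the unrolled bound $p_{m+1}\le \Pi_m p_M + (L+\varepsilon)(1-\Pi_m)$ (whose induction and telescoping identity both check out), and the use of $1-x\le e^{-x}$ together with $\sum_m \delta_m=\infty$ to force $\Pi_m\to 0$ constitute the standard proof of this lemma. Note, however, that the paper itself offers no proof to compare against --- it states the lemma as a quoted result of \c{S}oltuz and Grosan \cite{bib12} --- so your write-up is in effect supplying the argument that the paper outsources to the literature, and it does so correctly, including the two routine but necessary side cases ($\limsup_m q_m=+\infty$ and the trivial lower bound from $p_m\ge 0$).
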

\begin{lemma} \cite{bib8} With existing $N \in \mathbb{Z}_{+}$ define a sequence $\left\{p_{m}\right\}$  in $\mathbb{R}_{+}$  in such a way  for all $m \geq N$, $\left\{p_{m}\right\}$ with the property:

$$
p_{m+1} \leq\left(1-\delta_{m}\right) p_{m}+\delta_{m} q_{m}
$$

where $\delta \in (0,1) \forall m \in \mathbb{Z}_{+}$, such that $\sum_{m=0}^{\infty} \delta_{m}=\infty$ and $\delta_{m} \geq 0$ which  define a  sequence whose terms are bounded . Then:

$$
0 \leq \lim _{m\rightarrow \infty} \sup p_{m} \leq \lim _{m \rightarrow \infty} \sup q_{m} .
$$
\end{lemma}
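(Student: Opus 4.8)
The plan is to fix an arbitrary $\varepsilon > 0$, set $L = \limsup_{m\to\infty} q_m$ (finite by the boundedness hypothesis on $\{q_m\}$), and prove the sharper bound $\limsup_{m\to\infty} p_m \leq L + \varepsilon$; letting $\varepsilon \downarrow 0$ then yields the claim, while the lower bound $0 \leq \limsup_{m\to\infty} p_m$ is immediate from $p_m \in \mathbb{R}_{+}$. First I would invoke the definition of $\limsup$ to choose an index $M \geq N$ with $q_m \leq L + \varepsilon$ for every $m \geq M$; substituting this into the recursive inequality gives
$$p_{m+1} \leq (1-\delta_m)\,p_m + \delta_m(L+\varepsilon), \qquad m \geq M.$$

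The key step is the affine change of variable $u_m = p_m - (L+\varepsilon)$, which absorbs the constant term: a direct computation turns the displayed inequality into the homogeneous recursion
$$u_{m+1} \leq (1-\delta_m)\,u_m, \qquad m \geq M.$$
Iterating from $M$ to $m-1$ produces $u_m \leq u_M \prod_{k=M}^{m-1}(1-\delta_k)$ as long as the iterates stay nonnegative. I would then split into two cases. If $u_{m_0} \leq 0$ for some $m_0 \geq M$, then since $1-\delta_m \in (0,1)$ the recursion forces $u_m \leq 0$, i.e. $p_m \leq L+\varepsilon$, for all $m \geq m_0$, giving the bound at once. Otherwise $u_m > 0$ for all $m \geq M$, and the telescoped product controls the tail.

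The main obstacle is the limiting behavior of the product $\prod_{k=M}^{m-1}(1-\delta_k)$, which I expect to tend to $0$ as $m \to \infty$; this is exactly where the hypotheses $\delta_k \in (0,1)$ and $\sum_k \delta_k = \infty$ enter decisively. Using the elementary estimate $1 - x \leq e^{-x}$, valid for all real $x$, one obtains $\prod_{k=M}^{m-1}(1-\delta_k) \leq \exp\!\bigl(-\sum_{k=M}^{m-1}\delta_k\bigr) \to 0$ because the exponent diverges to $-\infty$. Hence $u_m \to 0$, so $\limsup_{m\to\infty} u_m \leq 0$ and $\limsup_{m\to\infty} p_m \leq L+\varepsilon$ in this case as well. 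Since $\varepsilon>0$ was arbitrary, we conclude $\limsup_{m\to\infty} p_m \leq L = \limsup_{m\to\infty} q_m$. I note that this argument is essentially identical to the one required for the preceding Lemma, so the two proofs can share the same product estimate as their common core.
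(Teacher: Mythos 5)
Your proposal is correct. Note first that the paper offers no proof of this lemma at all: it is stated with a citation to reference [8] (Ali and Ali) and used as a black box, so there is no in-paper argument to compare against. Your argument is the standard one and is sound: the affine substitution $u_m = p_m - (L+\varepsilon)$ does reduce the recursion to $u_{m+1} \leq (1-\delta_m)u_m$, the case split on the sign of $u_m$ is handled correctly (and in fact the telescoped product bound $u_m \leq u_M \prod_{k=M}^{m-1}(1-\delta_k)$ is valid in both cases since each factor $1-\delta_k$ is positive), and the estimate $1-x \leq e^{-x}$ together with $\sum_k \delta_k = \infty$ kills the product. You also implicitly and correctly repaired the garbled hypothesis in the statement: the condition ``$\delta_m \geq 0$ which define a sequence whose terms are bounded'' should read that $\{q_m\}$ is a bounded nonnegative sequence (as in the preceding Lemma), and your proof uses exactly that, which is what makes $L = \limsup_m q_m$ finite. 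The one cosmetic remark is that your final sentence claims this proof is ``essentially identical'' to that of the preceding Lemma; that is true of the standard proofs of both, but again neither is proved in the paper.
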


\begin{definition} Let $\left\{p_{m}\right\}$ and $\left\{q_{m}\right\}$ define two sequences which belongs to  $\mathbb{R}_{+}$ that converge to $p$ and $q$, respectively. Assume that:

$$
\ell=\lim _{n \rightarrow \infty} \frac{\left|p_{m}-p\right|}{\left|q_{m}-q\right|}
$$

(i) If $\ell=0$, then $\left\{p_{m}\right\}$ converges to $p$ faster than $\left\{q_{m}\right\}$ to $q$.

(ii) If $0<\ell<\infty$, then $\left\{p_{m}\right\}$ and $\left\{q_{m}\right\}$ both will have same convergence rate .
\end{definition}

\begin{definition} \cite{bib8} Let $\left\{\theta_{m}\right\}$
and $\left\{\eta_{m}\right\}$ be two iterative algorithms having the same point $\theta$ as point of convergence with the  error estimate:

$$
\begin{aligned}
	\left|\theta_{m}-\theta\right| & \leq p_{m} \\
	\left|\eta_{m}-\theta\right| & \leq q_{m}
\end{aligned}
$$
If $\lim_{m \rightarrow \infty} \frac{p_{m}}{q_{m}} = 0$, then convergence of $\{\theta_{m}\}$ is faster than $\eta_{m}$.

\end{definition}

\begin{definition} \cite{bib13} Let $F$ and $R$ be two self operators which is defined on a nonempty subset $P$ of a complete normed linear space $Q$. An operator $F$ is known as approximate operator of $R$ if existing a fixed $\epsilon>0$  such that $\|F p-R p\| \leq \epsilon$ for all $p \in P$.
	\end{definition}
\section{Main results}
By  $AT$ iteration in complete normed linear space, we will establish results which are related to weak contractions.\\
\begin {theorem} Let $R: P \rightarrow P$ is defined as a  weak contraction having the condition \ref{eq:equation11} , where $P$ state  a nonempty closed and convex subset of a complete normed linear space $Q$. Then, $\left\{s_{m}\right\}$ defined by $AT$ iterative algorithm \eqref{eq:eq8} reaches a single, exclusive fixed point of $P$.
\end{theorem}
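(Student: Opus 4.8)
The plan is to split the argument into two parts: first secure the existence of a fixed point, and then show that the $AT$ iterates $\{s_m\}$ converge to it, with uniqueness emerging as a by-product. For existence and uniqueness I would invoke Theorem~1, which guarantees that a mapping satisfying both \eqref{eq:equation10} and \eqref{eq:equation11} has a unique fixed point; call it $s$, so that $Rs = s$. The whole convergence analysis then rests on one clean observation: setting $p = s$ in condition \eqref{eq:equation11} makes the term $L\|p - Rp\| = L\|s - Rs\|$ vanish, leaving the pure contraction estimate $\|Rq - s\| = \|Rs - Rq\| \leq \zeta\|q - s\|$ for every $q \in P$. Thus any application of $R$ against the fixed point contracts distances by the factor $\zeta$, and iterating once more gives $\|R^2 q - s\| \leq \zeta^2 \|q - s\|$.

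Next I would propagate this estimate through the nested structure of \eqref{eq:eq8}, working from the inside out. First, for the auxiliary point $w_m = (1-a_m)s_m + a_m R s_m$, the triangle inequality together with the contraction bound yields $\|w_m - s\| \leq (1 - a_m(1-\zeta))\|s_m - s\|$. Feeding this into the definition of $b_m$, which averages $R^2 s_m$ and $R^2 w_m$, and applying $\|R^2(\cdot) - s\| \leq \zeta^2\|\cdot - s\|$ to each summand, I expect to obtain $\|b_m - s\| \leq \tfrac{\zeta^2}{2}\big(2 - a_m(1-\zeta)\big)\|s_m - s\|$. Finally, writing $s_{m+1} = R v_m$ with $v_m = (1-a_m)b_m + a_m R b_m$, the same two ingredients give $\|s_{m+1} - s\| \leq \zeta\big(1 - a_m(1-\zeta)\big)\|b_m - s\|$.

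Combining the last two bounds produces the recursion
\begin{equation}
\|s_{m+1} - s\| \leq \frac{\zeta^3}{2}\big(1 - a_m(1-\zeta)\big)\big(2 - a_m(1-\zeta)\big)\|s_m - s\|. \nonumber
\end{equation}
Since $\zeta \in (0,1)$ and $a_m \in (0,1)$, the factors $1 - a_m(1-\zeta)$ and $2 - a_m(1-\zeta)$ are bounded above by $1$ and $2$ respectively, so the coefficient is strictly below $\zeta^3 < 1$. I would then conclude either by direct telescoping, $\|s_m - s\| \leq \zeta^{3m}\|s_0 - s\| \to 0$, or by applying Lemma~1 with the vanishing perturbation term $v_m \equiv 0$. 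Uniqueness can be read off independently from the same observation: if $s'$ is another fixed point, then $\|s - s'\| = \|Rs - Rs'\| \leq \zeta\|s - s'\|$ forces $s = s'$.

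The argument is largely routine bookkeeping once the key observation is in place, so I do not anticipate a serious obstacle. The only point requiring care is tracking the constants correctly through the two-level nesting — in particular estimating the convex-combination coefficients $1 - a_m(1-\zeta)$ uniformly in $m$ rather than letting them degrade — and verifying that the resulting per-step factor stays uniformly below $1$, so that the decay is genuinely geometric rather than merely $m$-dependent.
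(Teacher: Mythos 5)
Your proposal is correct and follows essentially the same route as the paper: fix the point $s$, use $p=s$ in \eqref{eq:equation11} to kill the $L$-term and obtain $\|Rq-s\|\leq\zeta\|q-s\|$, propagate this through the convex combinations and the averaged $R^2$ terms to get $\|b_m-s\|\leq\zeta^2\|s_m-s\|$ and then $\|s_{m+1}-s\|\leq\zeta^3\|s_m-s\|$, and conclude by telescoping. The only (harmless) differences are that you keep the sharper factors $1-a_m(1-\zeta)$ before discarding them and that you explicitly invoke Theorem~1 for existence and add the one-line uniqueness argument, which the paper leaves implicit.
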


\begin{proof}- By condition \eqref{eq:equation11}, we have:

$$
\begin{aligned}
	\left\|R s_{m}-s\right\| & =\left\|R s_{m}-R s\right\| \\
	& \leq \zeta \|s_{m}-s\|+L\|s-Rs\| \\
	& = \zeta \|s_{m}-s\|, \quad \forall m \in \mathbb{Z}_{+} .
\end{aligned}
$$

Using $AT$ iteration \eqref{eq:eq8} , we have:\\
$$
\begin{aligned}
	\| b_{m} - s \| &= \frac{1}{2} \| R^2(s_m) + R^2((1 - a_m) s_m + a_m Rs_m) - 2s \| \\
	&\leq \frac{1}{2} \| R^2(s_m) -s \| + \frac{1}{2} \| R^2((1 - a_m) s_m + a_m Rs_m) - s  \| \\
	& \leq \frac{\zeta}{2} \|Rs_m - s\| + \frac{\zeta}{2} \| R((1 - a_m) s_m + a_m R s_m) - s\|\\
	& \leq \frac{\zeta^2}{2} \|s_m - s\| + \frac{\zeta^2}{2} \|(1 - a_m) s_m + a_m R s_m - s\| \\
	&\leq \frac{\zeta^2}{2} \|s_m - s\| + \frac{\zeta^2}{2} \|(1 - a_m) s_m + a_m R s_m -(1 - a_m)s-a_ms\| \\
	&\leq \frac{\zeta^2}{2} \|s_m - s\| + (1 - a_m) \frac{\zeta^2}{2}\| s_m -s\|+\frac{\zeta^3}{2}a_m\|s_m - s\|\\
	&\leq \bigg(\frac{\zeta^2}{2}+(1 - a_m) \frac{\zeta^2}{2}+\frac{\zeta^2}{2}a_m \bigg)\|s_m - s\|\\
	&\leq\zeta^2\|s_m - s\| \\
\end{aligned} 
$$
so we have ,
\begin{equation}
	\| b_{m} - s \| \leq \zeta^2 \|s_m - s\|
	\label{eq:equation12}
\end{equation}

Using Eq.  \eqref{eq:equation12}, we get:
$$
\begin{aligned}
	\| s_{m+1} - s \|& = \| R((1 - a_m)b_m + a_m R (b_m)) - s\|\\
	&\leq \zeta\| ((1 - a_m)b_m + a_m R (b_m)) - s\|\\
	&\leq \zeta\bigg((1 - a_m)\| b_m -s\| + a_m\| R (b_m)) - s\|\bigg)\\
	&\leq \zeta\bigg((1 - a_m)\| b_m -s\| + a_m\zeta\| b_m - s\|\bigg)\\
	&\leq\zeta( (1 - a_m)+a_m\zeta)\| b_m - s\|\\
\end{aligned}
$$

As $0<\zeta<1$ and $a_m \in(0,1)$, therefore, using  $(1-(1-\zeta) a_{m}\leq 1$, we arrive the conclussion:

$$
\left\|s_{m+1}-s\right\| \leq \zeta^{3}\left\|s_{m}-s\right\| .
$$

Consequently, we get:

\begin{equation}
	\left\|s_{m+1}-s\right\| \leq \zeta^{3(m+1)}\left\|s_{0}-s\right\|
	\label{eq:equation13}
\end{equation}

Since $0<\zeta<1$, hence, $\left\{s_{m}\right\}$ converges strongly to $s$.\\
\vspace{0.1cm}

In the next theorem we will show the almost $R$-stability of $AT$ iterative algorithm .
\end{proof}
\begin{theorem} Let $R: P \rightarrow P$ be a mapping which defines to a weak contraction with the condition  \eqref{eq:equation11} where $P$ is a nonempty, closed, and convex subset of a complete normed linear  space $Q$. Then, $AT$ iterative algorithm \eqref{eq:eq8} is almost $R$-stable. 
	\end{theorem}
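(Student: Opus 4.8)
The plan is to reduce the almost $R$-stability of \eqref{eq:eq8} to the comparison lemma of \cite{bib11} via the same one-step contraction estimate already obtained for the exact orbit. Write $g(R,\cdot)$ for the map sending a point $r_m$ to its next $AT$-iterate, i.e. $g(R,r_m)=R\big((1-a_m)c_m+a_m R c_m\big)$ with $c_m=\tfrac12 R^2(r_m)+\tfrac12 R^2((1-a_m)r_m+a_m R r_m)$, and let $\{r_m\}\subset P$ be an arbitrary approximating sequence with $\gamma_m=\|r_{m+1}-g(R,r_m)\|$. Assume $\sum_{m=0}^{\infty}\gamma_m<\infty$; in particular $\gamma_m\to 0$. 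The goal is to show $r_m\to s$, where $s$ is the unique fixed point of $R$.

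First I would record that, exactly as in the proof of the preceding convergence theorem, $\|Rr_m-s\|\le\zeta\|r_m-s\|$. Then I would repeat verbatim the two displayed chains of that proof with $s_m$ replaced by $r_m$: the first chain yields the analogue of \eqref{eq:equation12}, namely $\|c_m-s\|\le\zeta^2\|r_m-s\|$, and the second, using $(1-(1-\zeta)a_m)\le 1$, yields the one-step contraction
\[
\|g(R,r_m)-s\|\le\zeta^3\|r_m-s\|.
\]
Nothing in these computations used that the sequence was the true orbit, so they transfer without change.

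Next I would insert $g(R,r_m)$ by the triangle inequality,
\[
\|r_{m+1}-s\|\le\|r_{m+1}-g(R,r_m)\|+\|g(R,r_m)-s\|\le \zeta^3\|r_m-s\|+\gamma_m,
\]
which is precisely the recursion $u_{m+1}\le s\,u_m+v_m$ of the lemma of \cite{bib11} with $u_m=\|r_m-s\|$, $v_m=\gamma_m$ and contraction constant $\zeta^3\in[0,1)$. Since $\gamma_m\to 0$, that lemma gives $\lim_{m\to\infty}\|r_m-s\|=0$, i.e. $r_m\to s$, establishing almost $R$-stability.

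I do not expect a genuine obstacle here: the whole argument is carried by the contraction estimate $\|g(R,r_m)-s\|\le\zeta^3\|r_m-s\|$, which is a transcription of the convergence proof, together with the cited comparison lemma. The only point that needs care is the bookkeeping, making sure the auxiliary point $c_m$ (the $b_m$ of \eqref{eq:eq8} evaluated at $r_m$) is expanded correctly so that the $\zeta^2$ bound, and hence the $\zeta^3$ bound, is genuinely uniform in $m$. Note also that the hypothesis $\sum\gamma_m<\infty$ is stronger than strictly required, since $\gamma_m\to 0$ alone already drives the lemma.
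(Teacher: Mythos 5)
Your proposal is correct and follows essentially the same route as the paper: the triangle-inequality decomposition $\|r_{m+1}-s\|\le\gamma_m+\|g(R,r_m)-s\|$, the one-step contraction bound $\|g(R,r_m)-s\|\le\zeta^{3}\|r_m-s\|$ carried over from the convergence theorem, and the comparison lemma of \cite{bib11}. If anything, your version is slightly cleaner, since you bound $\zeta^{3}(1-(1-\zeta)a_m)$ uniformly by $\zeta^{3}$ before invoking the lemma (which formally requires a fixed constant) and you correctly observe that only $\gamma_m\to 0$ is actually needed.
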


\begin{proof} Suppose that $\left\{r_{m}\right\}$ is an arbitrary sequence in $P$ and the sequence defined by $AT$ algorithm \eqref{eq:eq8} is $s_{m+1}=g\left(R, s_{m}\right)$ and $\gamma_{m}=\left\|r_{m+1}-g\left(R, r_{m}\right)\right\|, m \in \mathbb{Z}_{+}$. Now, we will show that:

$$
\sum_{m=0}^{\infty} \gamma_{m}<\infty \Longrightarrow \lim _{m \rightarrow \infty} r_{m}=s
$$

Let $\sum_{m=0}^{\infty} \gamma_{m}<\infty$, and then, by $AT$ algorithm \eqref{eq:eq8}, we have:

$$
\begin{aligned}
	\left\|r_{m+1}-s\right\| & \leq\left\|r_{m+1}-g\left(R, r_{m}\right)\right\|+\left\|g\left(R, r_{m}\right)-s\right\| \\
	& \leq \gamma_{m}+\zeta^{3}\left(1-(1-\zeta) {a_{m}}\right)\left\|r_{m}-s\right\| \\
\end{aligned}
$$

define $c_{m}=\left\|r_{m}-s\right\|$ and $v=\zeta^{3}(1-(1-\zeta)a_{m})$, and then, $0 \leq v<1$ and:

$$
c_{m+1} \leq v c_{m}+\gamma_{m}
$$

Thus, conclusion follows by Lemma 1.5.\\
\vspace{0.1cm}

The following theorem proves that $AT$ iterative algorithm converges faster than the algorithms  \eqref{eq:eq1} , \eqref{eq:eq2}, \eqref{eq:eq3}, \eqref{eq:eq4}, \eqref{eq:eq5}, \eqref{eq:eq6}, \eqref{eq:eq7} for weak contractions.
\end{proof}

\begin{theorem} Let $R: P \rightarrow P$ be a  mapping with weak contraction and \eqref{eq:equation11}, where $P$ is a nonempty, closed, and convex subset of a complete normed linear space $Q$. Let the sequences $\left\{s_{1, m}\right\},\left\{s_{2, m}\right\},\left\{s_{3, m}\right\}$, $\left\{s_{4, m}\right\},\left\{s_{5, m}\right\},\left\{s_{6, m}\right\}$, $\left\{s_{7, m}\right\}and \left\{s_{m}\right\}$ be defined by \eqref{eq:eq1} to \eqref{eq:eq8} , respectively,  converge to $s$ which is common  fixed point . Then,convergence of  $AT$ algorithm is faster towards fixed point  $s$   than \eqref{eq:eq1} to \eqref{eq:eq7}
.
\end{theorem}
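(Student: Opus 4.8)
The plan is to reduce the statement entirely to a comparison of explicit geometric error estimates and then invoke the rate-comparison criterion recalled above (the $\lim_{m\to\infty} p_m/q_m = 0$ definition). From the proof of the first main theorem I already have the per-step contraction for $AT$, namely $\|s_{m+1}-s\|\le \zeta^{3}\bigl(1-(1-\zeta)a_{m}\bigr)\|s_{m}-s\|$; I would deliberately \emph{keep} this factor rather than collapse it to the crude bound $\zeta^{3m}\|s_0-s\|$ used in \eqref{eq:equation13}, since the undropped factor is exactly what makes the comparison with $F^{*}$ work. The single tool driving every computation is the inequality $\|Rp-s\|\le \zeta\|p-s\|$, valid for all $p\in P$ because at the fixed point $s$ the $L$-term of \eqref{eq:equation11} vanishes; this is the estimate already exploited in \eqref{eq:equation12}. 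So the first step is to record this inequality and then apply it, together with the triangle inequality, layer by layer to each of \eqref{eq:eq1}--\eqref{eq:eq7}, producing for every scheme $i$ an estimate $\|s_{i,m}-s\|\le q_{i,m}$ where $q_{i,m}$ is a product of $m$ per-step factors $f^{i}_{k}$.

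Second, I would carry out these telescoping estimates. For the Picard iteration \eqref{eq:eq1} the factor is $\zeta$; for Mann \eqref{eq:eq2} it is $1-(1-\zeta)a_{m}$; for normal-$S$ \eqref{eq:eq5} it is $\zeta\bigl(1-(1-\zeta)a_{m}\bigr)$; for the $F^{*}$ iteration \eqref{eq:eq7} two nested applications of $R$ give $\zeta^{2}\bigl(1-(1-\zeta)a_{m}\bigr)$; and for the Ishikawa \eqref{eq:eq3}, the scheme \eqref{eq:eq4} and the Varat scheme \eqref{eq:eq6} the factor is a convex-combination expression. The decisive elementary estimate is $1-(1-\zeta)a_{m}>\zeta$ (valid since $a_{m}<1$), with its analogues for $c_{m},d_{m}$; using it one checks in every case the uniform step bound $f^{AT}_{m}=\zeta^{3}\bigl(1-(1-\zeta)a_{m}\bigr)\le \zeta\, f^{i}_{m}$. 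Consequently $p_{m}/q_{i,m}=\prod_{k=0}^{m-1}\bigl(f^{AT}_{k}/f^{i}_{k}\bigr)\le \zeta^{m}\to 0$, and the rate-comparison criterion yields that $AT$ converges faster than each of \eqref{eq:eq1}--\eqref{eq:eq7}. The worst case (smallest gain of one factor $\zeta$ per step) is precisely the comparison with $F^{*}$, where the shared factor $\bigl(1-(1-\zeta)a_{m}\bigr)$ cancels exactly and leaves the clean quotient $\zeta^{m}$.

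I expect the main obstacle to be twofold. The first is the bookkeeping for the multi-layer schemes \eqref{eq:eq3}, \eqref{eq:eq4} and \eqref{eq:eq6}, in which the auxiliary iterates $b_{m}$ (and $t_{m}$ for Varat) must each be bounded before the outer step, and whose per-step factor is a convex combination rather than a clean power of $\zeta$; the care needed there is to produce a uniform lower bound of at least $\zeta^{2}$ for each such factor, using convexity of the norm together with $1-(1-\zeta)a_{m}>\zeta$, after which $f^{AT}_{m}\le\zeta f^{i}_{m}$ follows. The second, more subtle point is the one flagged above: the comparison against $F^{*}$ genuinely requires retaining the factor $\bigl(1-(1-\zeta)a_{m}\bigr)$ in the $AT$ estimate, because replacing the $AT$ bound by the simplified $\zeta^{3m}\|s_0-s\|$ would divide it by the strictly smaller $F^{*}$ bound and fail to give a null quotient. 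Once both subtleties are handled, the conclusion holds uniformly in the parameters $a_{m},c_{m},d_{m}\in(0,1)$.
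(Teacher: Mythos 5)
Your proposal is correct and follows essentially the same strategy as the paper: establish a per-step geometric contraction factor for each scheme, telescope to get error estimates $p_m$, $q_{i,m}$, and conclude via the $\lim p_m/q_{i,m}=0$ criterion of Definition~1.10. The differences are in execution. You derive all seven competitor bounds directly from $\|Rp-s\|\le\zeta\|p-s\|$, whereas the paper only computes the normal-$S$ and $F^{*}$ bounds itself and cites Khan (for Picard) and Sintunavarat--Pitea (for Varat, and for Varat being faster than Mann, Ishikawa and $S$) for the rest; your version is more self-contained but at the cost of the convex-combination bookkeeping you flag for \eqref{eq:eq3}, \eqref{eq:eq4}, \eqref{eq:eq6}. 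The one place where your reasoning about the argument is off is the claimed necessity of retaining the factor $\bigl(1-(1-\zeta)a_m\bigr)$ for the $F^{*}$ comparison: that necessity arises only because you keep the sharp factor on the $F^{*}$ side while considering dropping it on the $AT$ side. In the Berinde-style framework of Definition~1.10 one is free to choose the (valid) estimates $\eta_m=\zeta^{3(m+1)}\|s_0-s\|$ and $\eta_{7,m}=\zeta^{2(m+1)}\|s_{7,0}-s\|$, as the paper does, and the quotient $\zeta^{m+1}$ already tends to zero; so the "subtlety" is self-imposed rather than a gap in the simpler route. Both versions share the standard caveat of this framework, namely that one compares chosen upper bounds on the errors rather than the errors themselves.
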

\begin{proof}
	According to equation  \eqref{eq:equation13} from  Theorem 2.1,
	
	$$
	\left\|s_{m+1}-s\right\| \leq \zeta^{3(m+1)}\left\|s_{0}-s\right\|=\eta_{m}, m \in \mathbb{Z}_{+}
	$$
	
	by Khan \cite{bib14} Proposition 1 ,we have

	$$
	\left\|s_{1, m}-s\right\| \leq \zeta^{m+1}\left\|s_{1,0}-s\right\|=\eta_{1, m}, m \in \mathbb{Z}_{+}
	$$
	
	Then:
	
	$$
	\frac{\eta_{m}}{\eta_{1, m}}=\frac{\zeta^{3(m+1)}\left\|s_{0}-s\right\|}{\zeta^{m+1}\left\|s_{1,0}-s\right\|}=\zeta^{2(m+1)} \frac{\left\|s_{0}-s\right\|}{\left\|s_{1,0}-s\right\|}
	$$
	
	Since $0<\zeta<1$, therefore, we have $\frac{\eta_{m}}{\eta_{1, m}} \rightarrow 0$ as $m \rightarrow \infty$. Hence, the sequence $\left\{s_{m}\right\}$ converges faster than $\left\{s_{1, m}\right\}$ to $s$.
	
	Now, by normal-S algorithm \eqref{eq:eq5}, we have:
	
	$$
	\begin{aligned}
		\left\|s_{m+1}-s\right\| & =\left\|R\left(\left(1-a_{m})\right) s_{m}+{a_{m}} R s_{m}\right)-s\right\| \\
		& \leq \zeta\left[\left(1-a_{m})\right) s_{m}+a_{m}R s_{m}-s \|\right] \\
		&\leq \zeta\| ((1 - a_m)s_m + a_m R (s_m)) - s\|\\
		&\leq \zeta\bigg((1 - a_m)\| s_m -s\| + a_m\| R (s_m)) - s\|\bigg)\\
		&\leq \zeta\bigg((1 - a_m)\| s_m -s\| + a_m\zeta\| s_m - s\|\bigg)\\
		&\leq\zeta( (1 - a_m)+a_m\zeta)\| s_m - s\|\\
		& \leq \zeta\left\|s_{m}-s\right\| .
	\end{aligned}
	$$
	
	Similarly, we get:
	
	$$
	\left\|s_{m+1}-s\right\| \leq \zeta^{m+1}\left\|s_{0}-s\right\|\\
	$$
	Let,
	$$
	\left\|s_{5,m} - s\right\| \leq \zeta^{m+1}\left\|s_{5,0} - s\right\| = \eta_{5,m}
	$$
	Then:
	
	$$
	\frac{\eta_{m}}{\eta_{5, m}}=\frac{\zeta^{3(m+1)}\left\|s_{0}-s\right\|}{\zeta^{m+1}\left\|s_{5,0}-s\right\|}=\zeta^2{(m+1)} \frac{\left\|s_{0}-s\right\|}{\left\|s_{5,0}-s\right\|}
	$$
	
	We get $\frac{\eta_{m}}{\eta_{5, m}} \rightarrow 0$ as $m \rightarrow \infty$. Hence, the sequence $\left\{s_{m}\right\}$ converges faster than $\left\{s_{5, m}\right\}$ to the fixed point $s$.
	
	Sintunavarat and Pitea \cite{bib15} was proved
	
	$$
	\left\|s_{6, m}-s\right\| \leq \zeta^{m+1}[1-(1-\zeta) e(f-g+g f)]^{m+1}\left\|s_{6,0}-s\right\|=\eta_{6, m}, m \in \mathbb{Z}_{+} .
	$$
	
	And using t $1-(1-\zeta) e(f-g+g f) \leq 1$, we obtain:
	
	$$
	\left\|s_{6, m}-s\right\| \leq \zeta^{m+1}\left\|s_{6,0}-s\right\|=\zeta_{6, m}
	$$
	
	Then:
	
	$$
	\frac{\eta_{m}}{\eta_{6, m}}=\frac{\zeta^{3(m+1)}\left\|s_{0}-s\right\|}{\zeta^{m+1}\left\|s_{6,0}-s\right\|}=\zeta^{2(m+1)} \frac{\left\|s_{0}-s\right\|}{\left\|s_{6,0}-s\right\|}
	$$
	
	Thus, we get $\frac{\eta_{m}}{\eta_{6, m}} \rightarrow 0$ as $m \rightarrow \infty$. Hence, $\left\{s_{m}\right\}$ converges faster than $\left\{s_{6, m}\right\}$ to $s$.

	Also,Sintunavarat and Pitea \cite{bib15} ] given the result that the Varat algorithm converges faster than S, Mann and Ishikawa iterative algorithms for the mapping with weak contraction.
	showed  that the Varat algorithm converges faster than Mann, Ishikawa, and $\mathrm{S}$ iterative algorithms for the class of weak contractions. \\
	Now as for $F^{*}$ iteration we have,
	$$
	\begin{aligned}
		\left\|s_{m+1}-s\right\| & =\left\|R^2\left(\left(1-a_{m})\right) s_{m}+{a_{m}} R s_{m}\right)-s\right\| \\
		& \leq \zeta\left[R(\left(1-a_{m})\right) s_{m}+a_{m}R s_{m})-s \|\right] \\
		&\leq \zeta^2\| ((1 - a_m)s_m + a_m R (s_m)) - s\|\\
		&\leq \zeta^2\bigg((1 - a_m)\| s_m -s\| + a_m\| R (s_m)) - s\|\bigg)\\
		&\leq \zeta^2\bigg((1 - a_m)\| s_m -s\| + a_m\zeta\| s_m - s\|\bigg)\\
		&\leq\zeta^2( (1 - a_m)+a_m\zeta)\| s_m - s\|\\
		& \leq \zeta^2\left\|s_{m}-s\right\| .
	\end{aligned}
	$$
	Similarly, we get:
	
	$$
	\left\|s_{m+1}-s\right\| \leq \zeta^{2(m+1)}\left\|s_{0}-s\right\|\\
	$$
	Let,
	$$
	\left\|s_{7,m} - s\right\| \leq \zeta^{2(m+1)}\left\|s_{7,0} - s\right\| = \eta_{7,m}
	$$
	Then:
	
	$$
	\frac{\eta_{m}}{\eta_{7, m}}=\frac{\zeta^{3(m+1)}\left\|s_{0}-s\right\|}{\zeta^{2(m+1)}\left\|s_{7,0}-s\right\|}=\zeta^{(m+1)} \frac{\left\|s_{0}-s\right\|}{\left\|s_{7,0}-s\right\|}
	$$
	
	We get $\frac{\eta_{m}}{\eta_{7, m}} \rightarrow 0$ as $m \rightarrow \infty$. Hence, the sequence $\left\{s_{m}\right\}$ converges faster than $\left\{s_{7, m}\right\}$ to the fixed point $s$.

	Thus, $AT$ iterative algorithm converges faster than all the iterative algorithms \\
\end{proof}

\begin{example}
Let $P = \mathbb{R}$ be a complete normed linear space  norm define as usual norm and $P = [0,\pi]$, a subset of $Q$. Let $R : Q \rightarrow Q$ be a self-mapping defined by $Rx = \cos\left(\frac{x}{2}\right)$ for all $x \in P$. It can be easily verified that $R$ is a weak contraction satisfying , and $R$ has a unique fixed point $p = 0.9$. Choose the control sequences $a_{m} = 0.5$.
\end{example}

\begin{table}[]
	\centering
	\caption{Table 1 .A Comparative Analysis of Iterative Algorithms Applied to Example 1}
	\begin{tabular}{lllllll}
		iteration & AT       & $F^{*}$ & picard   & normal\_s & mann     & varat    \\
		0         & 1.658950 & 1.658950                 & 1.658950 & 1.658950  & 1.658950 & 1.658950 \\
		1         & 0.893291 & 0.934867                 & 0.675263 & 0.725825  & 1.517125 & 0.688976 \\
		2         & 0.900422 & 0.901728                 & 0.943542 & 0.929411  & 1.403039 & 0.939580 \\
		3         & 0.900367 & 0.900420                 & 0.890765 & 0.895096  & 1.310885 & 0.892081 \\
		4         & 0.900367 & 0.900369                 & 0.902446 & 0.901311  & 1.236186 & 0.902078 \\
		5         & 0.900367 & 0.900367                 & 0.899914 & 0.900198  & 1.175459 & 0.900012 \\
		6         & 0.900367 & 0.900367                 & 0.900466 & 0.900398  & 1.125969 & 0.900441 \\
		7         & 0.900367 & 0.900367                 & 0.900346 & 0.900362  & 1.085556 & 0.900352 \\
		8         & 0.900367 & 0.900367                 & 0.900372 & 0.900368  & 1.052500 & 0.900370 \\
		9         & 0.900367 & 0.900367                 & 0.900366 & 0.900367  & 1.025423 & 0.900367
	\end{tabular}
\end{table}

\begin{figure}[ht!]
	\centering
	\includegraphics[width=0.8\textwidth]{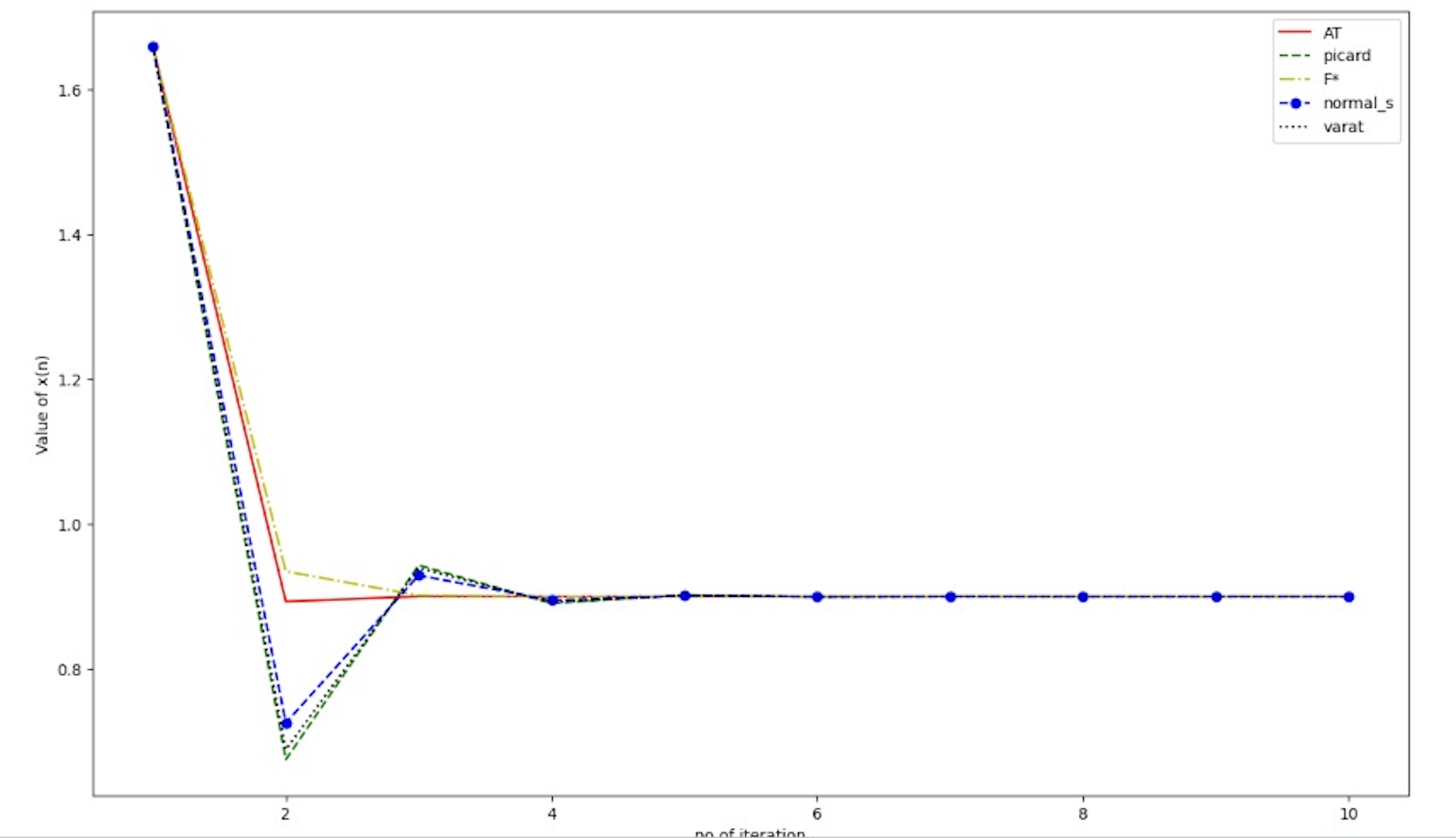}
	\caption{Figure 1. Comparisons of iterations.}
	\label{iterations.jpg}
\end{figure}

\begin{figure}[ht!]
	\centering
	\includegraphics[width=0.8\textwidth]{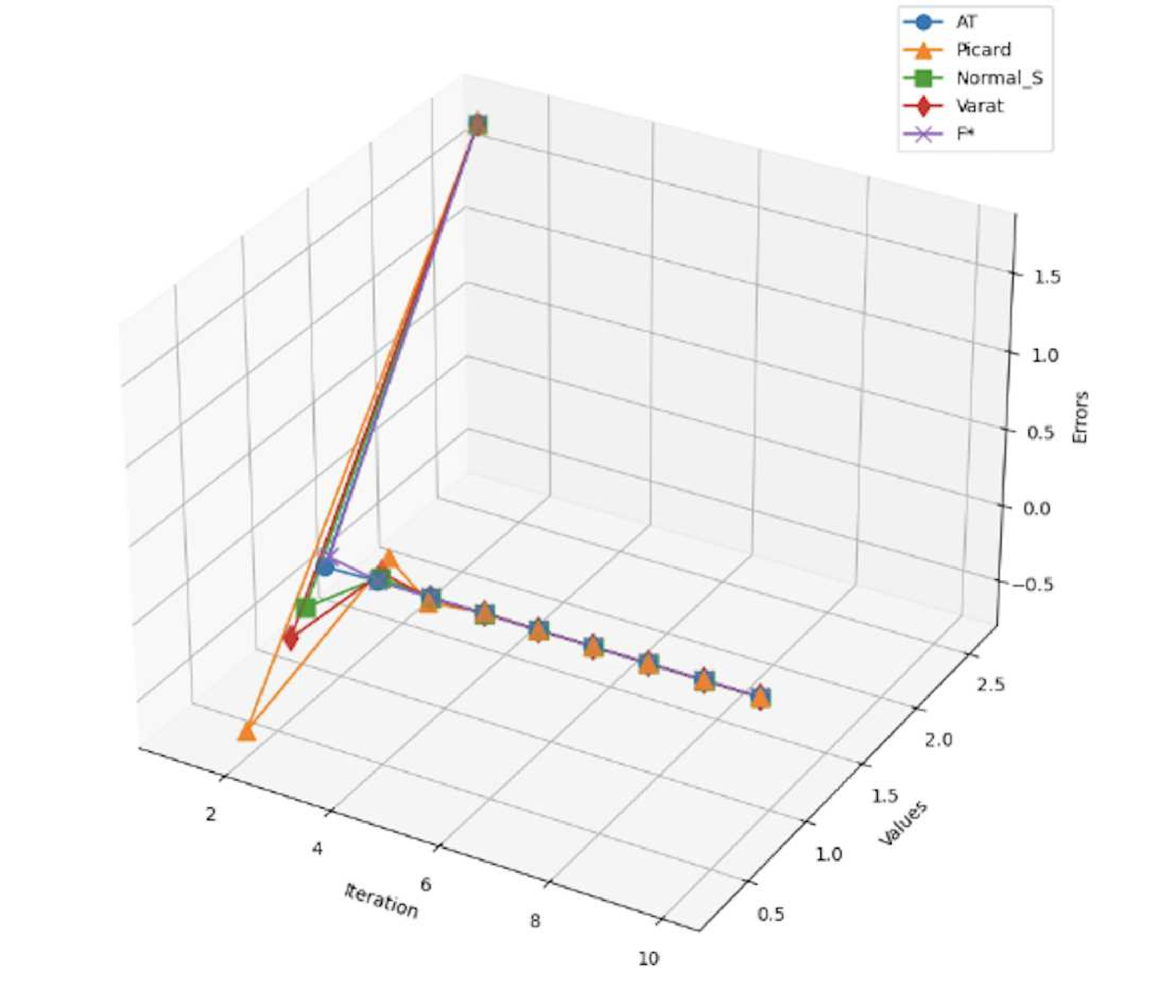}
	\caption{Comparisons  errors of different iterations with  $AT$ iteration}
	\label{graph3derror}
\end{figure}

Using Python ,  it was determined that the AT iterative algorithm described by  converges more rapidly to the fixed point \(s = 0.9\) in comparison to the Mann, S, Picard,  normal-S, and Varat, \(F^{*}\) iterative algorithms. Refer to Table 1 and Fig. 1 for details.

\begin{example}
Let $Q = \mathbb{R}$ set of real numbers with  usual norm, $P = [0,1] \subset Q$. Let $R: P \to P$ be defined as ,\\
$$
R(p) = \begin{cases}
	\frac{p}{2}, & 0 \leq p < 1 \\
	\frac{1}{4}, & \text{when } p = 1
\end{cases}
$$
$R$ with  weak contraction and has fixed point $0$ which is unique but not a contraction mapping.
\end{example}

\section{data dependence}
\begin{theorem}
Let $F$ be an approximate operator of a weak contraction $R$ satisfying \eqref{eq:equation11} and
$\{s_m\}$ be a sequence defined by the $AT$ iterative algorithm \eqref{eq:eq8} for $R$. Now, define a sequence
$\{v_m\}$ for $F$ as follows:
\begin{align}
	\begin{cases}
		v_0 &= v \in P, \\
		v_{m+1} &= F((1 - a_m)g_m + a_m F g_m), \\
		g_m &= \frac{1}{2}[F^2((1 - a_m)v_m + a_mFv_m) + F^2v_m],
	\end{cases}
	\label{eq:14}
\end{align}

where $a_{m}$ is a sequence in $(0, 1)$ satisfying $\frac{1}{2} \leq a_{m}$ for all $a_m \in \mathbb{Z}^+$ and $\sum_{m=0}^{\infty} a_m = \infty$. If $Rs = s$ and $Ft = t$ such that $v_m \to t$ as $m \to \infty$, then we have:
\[
\|s - t\| \leq \frac{5 \epsilon +2\zeta\epsilon+\zeta^2\epsilon}{1-\zeta},
\]
where $\epsilon > 0$ is a fixed number.
\end{theorem}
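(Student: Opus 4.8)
The plan is to run the two iterations \eqref{eq:eq8} and \eqref{eq:14} in parallel and extract a one--step comparison inequality for $c_m := \|s_m - v_m\|$ of exactly the shape required by the comparison lemma $p_{m+1}\le(1-\delta_m)p_m+\delta_m q_m \Rightarrow \limsup p_m\le\limsup q_m$, namely $c_{m+1} \le \bigl(1-(1-\zeta)a_m\bigr)c_m + (1-\zeta)a_m\,q_m$, where $\{q_m\}$ is a bounded sequence with $\limsup_m q_m = \frac{(5+2\zeta+\zeta^2)\epsilon}{1-\zeta}$. Since $\delta_m:=(1-\zeta)a_m\in(0,1)$ and $\sum_m a_m=\infty$ force $\sum_m\delta_m=\infty$, that lemma yields $\limsup_m c_m \le \limsup_m q_m$. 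Finally, Theorem 2.1 gives $s_m\to s$ and the hypothesis gives $v_m\to t$, so $c_m\to\|s-t\|$; passing to the limit produces the asserted estimate. Thus the whole theorem reduces to producing the recursion with the correct constant.

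First I would record two consequences of the approximate--operator hypothesis $\|Fp-Rp\|\le\epsilon$ together with the contractive part of \eqref{eq:equation11}: the mixed bound $\|Fp-Rq\|\le \zeta\|p-q\|+\epsilon$ and its iterate $\|F^2p-R^2q\|\le \zeta^2\|p-q\|+(1+\zeta)\epsilon$ (the latter by inserting $R(Fp)$ and applying the former). With these in hand I would estimate $\|b_m-g_m\|$ by splitting the two averaged blocks, controlling $\|R^2 s_m - F^2 v_m\|$ and $\|R^2 w_m - F^2 u_m\|$ with $w_m=(1-a_m)s_m+a_mRs_m$ and $u_m=(1-a_m)v_m+a_mFv_m$. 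The inner convex combinations obey $\|u_m-w_m\|\le\bigl(1-(1-\zeta)a_m\bigr)\|s_m-v_m\|+a_m\epsilon$, and assembling the two halves gives $\|b_m-g_m\|\le \zeta^2\|s_m-v_m\| + C_1(\zeta,a_m)\epsilon$ for an explicit $C_1$.

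Next I would close the loop on the outer step. Writing $P_m=(1-a_m)b_m+a_mRb_m$ and $Q_m=(1-a_m)g_m+a_mFg_m$, I have $\|s_{m+1}-v_{m+1}\|=\|RP_m-FQ_m\|\le \zeta\|P_m-Q_m\|+\epsilon$ and $\|P_m-Q_m\|\le\bigl(1-(1-\zeta)a_m\bigr)\|b_m-g_m\|+a_m\epsilon$. Substituting the bound on $\|b_m-g_m\|$, the leading coefficient becomes $\zeta^3\bigl(1-(1-\zeta)a_m\bigr)$, which I dominate by $1-(1-\zeta)a_m$ using $\zeta^3\le1$; this furnishes the $(1-\delta_m)$ factor. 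All remaining terms are multiples of $\epsilon$; using $a_m\le1$, $\zeta<1$, and the normalization $1\le 2a_m$ (valid since $\tfrac12\le a_m$) to give every $\epsilon$--term a common factor $a_m$, I factor out $(1-\zeta)a_m$ and read off the constant $q_m=\frac{(5+2\zeta+\zeta^2)\epsilon}{1-\zeta}$, whose $\limsup$ equals itself.

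The \textbf{main obstacle} is the $\epsilon$--bookkeeping: the error must be propagated through two compositions ($R^2$ versus $F^2$) and through two nested convex combinations without destroying the $\zeta$--factors that guarantee contraction, and the accumulated constant must then be forced into precisely the form $(1-\zeta)a_m\,q_m$ demanded by the lemma. This is where the hypothesis $a_m\ge\tfrac12$ is essential, since it lets one absorb the ``free'' $\epsilon$--terms (those without an $a_m$) into $a_m$--multiples and thereby cancel the $(1-\zeta)$ against $\delta_m$; the precise numeral $5+2\zeta+\zeta^2$ is simply the output of this accounting. A secondary subtlety is the $L\|p-Rp\|$ summand in \eqref{eq:equation11}: as in the proof of Theorem 2.1 it must be arranged to drop out, so one should verify that every comparison step for the general iterates $s_m,v_m$ invokes only the purely contractive estimate $\|Rp-Rq\|\le\zeta\|p-q\|$, ensuring no $L$ survives into the final bound.
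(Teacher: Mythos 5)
Your overall architecture matches the paper's: estimate $\|b_m-g_m\|$ in terms of $\|s_m-v_m\|$, feed that into the outer step to get a recursion $p_{m+1}\le(1-\delta_m)p_m+\delta_m q_m$ with $\delta_m=(1-\zeta)a_m$, invoke the comparison lemma together with $\sum a_m=\infty$, and pass to the limit using $s_m\to s$ (Theorem 2.1) and $v_m\to t$. However, there is a genuine gap in how you treat the $L$-term. You propose to ``verify that every comparison step for the general iterates $s_m,v_m$ invokes only the purely contractive estimate $\|Rp-Rq\|\le\zeta\|p-q\|$, ensuring no $L$ survives into the final bound.'' That estimate is simply not available here: $R$ is only a weak contraction, so between two \emph{arbitrary} points the best you have is $\|Rp-Rq\|\le\zeta\|p-q\|+L\|p-Rp\|$, and the displacement $\|p-Rp\|$ vanishes identically only when $p$ is the fixed point. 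In Theorem 2.1 every application of \eqref{eq:equation11} has the fixed point $s$ as one argument, which is why $L$ disappears there; in the data-dependence setting you must compare $Rs_m$ (or $R^2s_m$, $Rb_m$) against $Fv_m$-type points, neither of which is fixed, so terms like $L\|s_m-Rs_m\|$, $L\|b_m-Rb_m\|$ unavoidably enter the recursion. Your two preparatory lemmas $\|Fp-Rq\|\le\zeta\|p-q\|+\epsilon$ and $\|F^2p-R^2q\|\le\zeta^2\|p-q\|+(1+\zeta)\epsilon$ inherit the same defect: each triangle-inequality insertion of $R(\cdot)$ produces an extra $L\|\cdot-R(\cdot)\|$ summand that your bookkeeping omits.

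The fix is the one the paper actually uses: carry all the $L$-terms into the quantity $q_m$ of the comparison lemma rather than trying to eliminate them algebraically. Since $s_m\to s$, $b_m\to s$, and $Rs=s$, every such term ($L\|s_m-Rs_m\|$, $L\|b_m-Rb_m\|$, $L\|b_m-s\|$, etc.) tends to $0$, so $\limsup_m q_m$ reduces to the pure $\epsilon$-part $\frac{(5+2\zeta+\zeta^2)\epsilon}{1-\zeta}$ and the stated bound follows. Your remaining machinery (the role of $a_m\ge\tfrac12$ in absorbing the free $\epsilon$-terms into $a_m$-multiples, factoring out $(1-\zeta)a_m$, and the final limit passage) is sound and coincides with the paper's accounting, but as written the proposal would not go through for a genuinely weak contraction with $L>0$; it silently proves the theorem only for Banach contractions.
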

\begin{proof}
	from equation \eqref{eq:eq8},\eqref{eq:equation11} and \eqref{eq:14} 
	we have ,
	
	\begin{equation}
		\begin{aligned}
			\left\|b_{m}-g_{m}\right\| 
			= &\left\| \frac{1}{2} R^2 s_m + \frac{1}{2} \left( R^2 \left( (1 - a_m) s_m + a_m R s_m \right) \right) - \frac{1}{2} F^2 v_m - \frac{1}{2} \left( F^2 \left( (1 - a_m) v_m + a_m F v_m \right) \right) \right\|\\
			&\leq \frac{1}{2} \| R^2 s_m - F^2 v_m \| + \frac{1}{2} \| R^2 \left( (1 - a_m) s_m + a_m R s_m \right) - F^2 \left( (1 - a_m) v_m + a_m F v_m \right) \|\\
			&\leq \frac{1}{2} \| R^2 s_m - R(Fv_m) \| 
			+ \frac{1}{2} \| R^2 \left( (1 - a_m) s_m + a_m R s_m \right) - R(F( \left( (1 - a_m) v_m + a_m F v_m \right))) \| \\
			& +\frac{1}{2} \| F^2 v_m - R(F v_m) \| + \frac{1}{2} \| R(F( \left( (1 - a_m) v_m + a_m F v_m \right))) - F^2 \left( (1 - a_m) v_m + a_m F v_m \right) \|\\
			&\leq \frac{\zeta}{2} \| R s_m - F v_m \| 
			+ \frac{\zeta}{2} \| R \left( (1 - a_m) s_m + a_m R s_m \right) - F \left( (1 - a_m) v_m + a_m F v_m \right) \| + \epsilon\\
			& +\frac{L}{2}\|Rs_m - R^2 s_m\|+\frac{L}{2}\|R \left( (1 - a_m) s_m + a_m R s_m \right)-R^2 \left( (1 - a_m) s_m + a_m R s_m \right)\|\\
			& \leq\frac{\zeta^2}{2}\|s_{m}-v_{m}\|+\frac{\zeta L}{2}\|s_{m}-Rs_{m}\|+\frac{\zeta \epsilon}{2}+\frac{\zeta \epsilon}{2}+\\
	        &\frac{\zeta^2}{2}\|(1 - a_m) s_m +
			 a_m R s_m - (1 - a_m) v_m + a_m F v_m\|\\
			&+\frac{\zeta L}{2}\|R((1 - a_m) s_m + a_m R s_m)-R^2((1 - a_m) s_m + a_m R s_m))\|
			+\epsilon +\frac{\zeta L}{2}\|s_{m}-R s_m\|\\
			&+\frac{\zeta L}{2}\|s_{m}-R s_m\|
			\frac{\zeta L}{2}\|((1 - a_m) s_m + a_m R s_m)-R((1 - a_m) s_m + a_m R s_m))\|+\\
			&\frac{L^2}{2}\|((1 - a_m) s_m + a_m R s_m)-R((1 - a_m) s_m + a_m R s_m))\|\\
			&\leq \|s_{m}-v_{m}\| +\frac{\zeta^2 L+2\zeta L+L^2}{2} \|s_{m}-Rs_{m}\|+\frac{\zeta^2 \epsilon}{2}+\zeta \epsilon+\epsilon+\frac{2\zeta L +L^2}{2}(1+\zeta)\|s_{m}-s\|
			\label{eq:15}
		\end{aligned}
	\end{equation}
	from equation \eqref{eq:15}
	
	\begin{equation}
		\begin{aligned}
			\left\|s_{m+1}-v_{m+1}\right\|= & \left\|R\left((1-a_{m}) b_{m}+a_{m} R b_{m}\right)-F\left((1-a_{m}) g_{m}+a_{m} F g_{m}\right)\right\| \\
			\leq & \left\|R\left((1-a_{m}) b_{m}+a_{m} R b_{m}\right)-R\left((1-a_{m}) g_{m}+a_{m} F g_{m}\right)\right\| \\
			& +\left\|R\left((1-a_{m}) g_{m}+a_{m} F g_{m}\right)-F\left((1-a_{m}) g_{m}+a_{m} F g_{m}\right)\right\| \\
			\leq & \zeta\left((1-a_{m})\left\|b_{m}-g_{m}\right\|+a_{m}\left\|R b_{m}-F g_{m}\right\|\right) \\
			& +L\left\|\left(1-a_{m}\right) b_{m}+a_{m} R b_{m}-R\left((1-a_{m}) b_{m}+a_{m} R b_{m}\right)\right\|+\epsilon \\
			\leq & \zeta\left((1-a_{m})\left\|b_{m}-g_{m}\right\|+a_{m}(\zeta\left\| b_{m}- g_{m}\right\|+L\|b_{m}-R_{m}\|+\epsilon\right) \\
			& +L\left\|\left(1-a_{m}\right) b_{m}+a_{m}\ R b_{m}-R\left((1-a_{m}) b_{m}+a_{m} R b_{m}\right)\right\|+\epsilon \\
			\leq & \zeta(\left((1-(1-\zeta)a_{m})\left\|b_{m}-g_{m}\right\|+a_{m} L\left\|b_{m}-R b_{m}\right\|+a_{m}\epsilon\right) \\
			& +L(1- a_{m})\|b_{m}-s\|+La_{m}\zeta\|b_{m}-s\|+L(1- a_{m})\zeta\|b_{m}-s\|+\\
			 &La_{m}\zeta^2\|b_{m}-s\|+\epsilon 
			\label{eq:16}
		\end{aligned}
	\end{equation}
	
	Since $\zeta \in (0, 1)$, $a_m \in (0, 1)$ with $a_m \geq \frac{1}{2}$; therefore, using the inequalities $\zeta < 1$, $\zeta^2 < 1$, $1 - a_{m} \leq a_{m}$, and $1 - (1 - \zeta)a_{m} \leq 1$ in \eqref{eq:16}, and from equation \eqref{eq:15} we get:
	\begin{align*}
		\|s_{m+1}-v_{m+1}\| \leq & (1-(1-\zeta)a_{m})( \|s_{m}-v_{m}\|) +(\zeta^2 L+2\zeta L+L^2)a_m \|s_{m}-Rs_{m}\|+\\
		&(\zeta^2 \epsilon)a_m+2\zeta \epsilon a_{m}+4\epsilon a_{m}
		 +(2\zeta L +L^2)a_m (1+\zeta)\|s_{m}-s\|
		+a_{m} L\left\|b_{m}-R b_{m}\right\|\\
		&+a_{m}\epsilon +2La_{m}\|b_{m}-s\|+La_{m}\zeta^2\|b_{m}-s\| \\
	\end{align*}
	Now, define:
	
	$$
	\begin{aligned}
		p_{m} & =:\left\|s_{m}-v_{m}\right\|, \\
		q_{m} & =: a_{m}(1-\zeta) \in(0,1), \\
		\delta_{m}& = \frac{\left( \zeta^2 L+2\zeta L+L^2) \|s_{m}-Rs_{m}\| +(2\zeta L +L^2) (1+\zeta)\|s_{m}-s\| +L\|b_{m}-R b_{m}\| \right)}{1-\zeta}\\
		& + \frac{ 2L\|b_{m}-s\| + L\zeta^2\|b_{m}-s\|+5\epsilon +\zeta^2 \epsilon + 2\zeta\epsilon}{1-\zeta}
	\end{aligned}
	$$

	$$
	p_{m+1} \leq\left(1-q_{m}\right) p_{m}+\delta_{m} q_{m} .
	$$
	
	All the conditions of Lemma 1.8 are satisfied. Hence, applying Lemma 1.8, we get:
	
	$$
	\begin{aligned}
		& 0 \leq \lim \sup _{m \rightarrow \infty}\left\|s_{m}-v_{m}\right\| \\
		& 0 \leq \lim \sup _{m \rightarrow \infty} \frac{\left( \zeta^2 L+2\zeta L+L^2) \|s_{m}-Rs_{m}\| +(2\zeta L +L^2) (1+\zeta)\|s_{m}-s\| +L\|b_{m}-R b_{m}\| \right)}{1-\zeta}\\
		& \frac{ 2L\|b_{m}-s\| + L\zeta^2\|b_{m}-s\|+5\epsilon +\zeta^2 \epsilon + 2\zeta\epsilon}{1-\zeta} \\
		& =\frac{5 \epsilon +2\zeta\epsilon+\zeta^2\epsilon}{1-\zeta} .
	\end{aligned}
	$$
	
	In view of Theorem 2.1, we know that $s_{m} \rightarrow s$, and using hypothesis, we obtain:
	
	$$
	\|s-t\| \leq \frac{5 \epsilon +2\zeta\epsilon+\zeta^2\epsilon}{1-\zeta}
	$$
\end{proof}
\section{Example}
\begin{example}
Consider
$$
R(t) = \cos\left(\frac{t}{2}\right) \quad \text{for } t \in [0,1]
$$\\
clearly R is a weak contraction.

Let \( F(t) = 1 - (0.25)t^2 + (0.0026)t^4 \), \( t \in [0,1] \)  and  $a_{m} =0.5$.

$$ \max_{t\in [0,1]} \lvert R(t)-F(t) \rvert   = 0.124978  $$

hence $\epsilon =0.124978$ and R has fixed point  $ 0.9$\\

\begin{table}[h]\label{table3}
	\centering
	\begin{tabular}{@{}ll@{}}
		It no & Iter. algorithm   \\ 
		1     &  1.658950           \\
		2     &  0.893291           \\
		3     &  0.900422           \\
		4     &  0.900367           \\
		5     &  0.900367           \\
		6     &  0.900367             \\
		7     &  0.900367             \\
		8     &  0.900367             \\
	\end{tabular}
\end{table}
\begin{align}
	\begin{cases}
		v_0 &= v \in P, \\
		v_{m+1} &= F((1 - 0.5)g_m + 0.5 F g_m), \\
		g_m &= \frac{1}{2}[F^2((1 - 0.5)v_m + 0.5 Fv_m) + F^2v_m],\\
	\end{cases}
	\end {align}

	Clearly, ${v_{m}}$ converges to $q=0.900367$ fixed point of F\\
	
	By Theorem 3.1 we  have
	
	$$\|p-q\| \leq \frac{5 \epsilon +2\zeta\epsilon+\zeta^2\epsilon}{1-\zeta}$$
	
	we have $ L =0$  and $\zeta=\frac{1}{2}$ thus we get
	
	$$\|p-q\| \leq \frac{5 \epsilon +2\zeta\epsilon+\zeta^2\epsilon}{1-\zeta}  $$
	as $\epsilon$ = 0.124978\;
	,$\zeta=\frac{1}{2}$\; we\;have

	$$\|p-q\| \leq 1.5625  $$
	
	Thus from the theorem we have 
	$\|p-q\| \leq 1.5625 $
	and we have actually
	
	$\|p-q\| = 0.000367  $.
	\end{example}

	\section{Application}
	We explore a unique solution for the given initial value problem through the application of the $AT$ algorithm.
	-\\
	\begin{equation}
		\left\{
		\begin{array}{l}
			y^{n}(t) + f\left(t, y(t), y^{1}(t), \ldots, y^{n-1}(t)\right) = 0 \\
			y\left(t_{0}\right) = p_{0}, \quad y^{\prime}(t_{0}) = p_{1}, \ldots, y^{n-1}\left(t_{0}\right) = p_{n-1}
		\end{array}
		\right\} \label{17}
	\end{equation}

	\[
	\text{where } p_i \in \mathbb{R} \quad \forall i = 0, 1, \ldots, n-1
	\]

	$f:[a, b] \times \mathbb{R}^{n} $ cnts function satisfies following condition such that

	\begin{equation}
		\begin{aligned}
			| f\left(t, y, y^{\prime},\ldots,y^{n-1}\right) - f\left(t, z, z^{\prime},\ldots,z^{n-1}\right)| & \leq a_{1}|y-z|  + a_{2}\left|y^{\prime}-z^{\prime}\right| + \ldots + a_{n}\left|y^{n-1}-z^{n-1}\right|
		\end{aligned}
		\label{equation:18}
	\end{equation}

	$\forall t \in[a, b]$
	
	$\forall y^{i}, z^{i} \in \mathbb{R} \quad \forall i=1,2-n-1$ and $a_{i} \geq 0$\\
	Let $X=C^{n}[a, b]$
	
	\begin{equation}\label{equation:19}
		\begin{aligned}
			\|x-y\|_{\infty} = \max \left\{\sup_{t \in [a, b]} |x(t) - y(t)|, \sup_{t \in [a, b]}|x'(t) - y'(t)|, \ldots, \sup_{t \in [a, b]}|x^{(n-1)}(t) - y^{(n-1)}(t)|\right\}
		\end{aligned}
	\end{equation}

	$\forall x, y \in X$
	
	Then $(X, \|.\|_{\infty})$ is a complete normed linear  space.
	Define $T$ on $X$ by

	\begin{equation}
		T(y(t)) = \int_{a}^{b} g(t) f\left(s, y(s)-y^{n-1}(s)\right) \, ds \label{equation:20}
	\end{equation}
	
	$$
	\text{where } g(t) \in X.\\
	$$
	\begin{theorem}
	Let $X = C^n[a, b]$ be a complete normed linear  space with norm defined by \ref{equation:19}. Let
	$f:[a, b] \times \mathbb{R}^{n} $ continuous function such that the Lipschitz condition \ref{equation:18} is satisfied. Suppose that
	\begin{equation}
		\alpha = \sum_{i=1}^{n} \alpha_i M < 1. \label{equation:21}
	\end{equation}
	
	$$
	\text{where } \frac{\partial^i g(t)}{\partial^i t} \leq M_i \quad \text{and} \quad M = \max\{M_0, \ldots, M_{n-1}\}
	$$

	Then the problem \ref{17} has a unique solution in $X$.
	\end{theorem}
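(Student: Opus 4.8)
The plan is to recast the initial value problem \eqref{17} as a fixed point equation for the integral operator $T$ defined in \eqref{equation:20} and then to verify that $T$ is a contraction on $(X,\|\cdot\|_\infty)$, so that existence and uniqueness follow directly from the fixed point machinery already developed in the paper. First I would establish the equivalence between \eqref{17} and the fixed point problem $y=Ty$: integrating the differential equation together with the prescribed initial data $y(t_0)=p_0,\dots,y^{n-1}(t_0)=p_{n-1}$ produces an integral representation of $y$ whose right-hand side is exactly $T(y(t))$, so that $y\in X$ solves \eqref{17} if and only if $y$ is a fixed point of $T$. I would also check at this stage that $T$ maps $X=C^{n}[a,b]$ into itself, which follows from the continuity of $f$ and the smoothness of the kernel $g$.

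The core of the argument is the contraction estimate. For $x,y\in X$ I would differentiate $T(x(t))$ and $T(y(t))$ with respect to $t$ up to order $n-1$, so that
$$
\frac{\partial^{i}}{\partial t^{i}}\bigl(T(x(t))-T(y(t))\bigr)=\int_{a}^{b}\frac{\partial^{i} g(t)}{\partial t^{i}}\,\bigl[f(s,x,\dots,x^{n-1})-f(s,y,\dots,y^{n-1})\bigr]\,ds .
$$
Applying the Lipschitz condition \eqref{equation:18} to the bracketed term, bounding each kernel derivative by $\dfrac{\partial^{i} g(t)}{\partial t^{i}}\le M_i\le M$, and taking the supremum over $t\in[a,b]$ for each $i=0,\dots,n-1$, the individual Lipschitz contributions $a_1|x-y|+\cdots+a_n|x^{n-1}-y^{n-1}|$ are each dominated by $\|x-y\|_\infty$. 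Collecting these bounds through the definition \eqref{equation:19} of the norm then yields
$$
\|Tx-Ty\|_\infty\le\Bigl(\sum_{i=1}^{n}\alpha_i M\Bigr)\|x-y\|_\infty=\alpha\,\|x-y\|_\infty ,
$$
where $\alpha<1$ by the standing hypothesis \eqref{equation:21}.

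Finally, since a contraction is a weak contraction in the sense of \eqref{eq:equation10}--\eqref{eq:equation11} (take $L=0$), Theorem 1.1 applies and guarantees that $T$ possesses a single fixed point in $X$; equivalently, \eqref{17} has a unique solution. In keeping with the theme of the paper, I would additionally remark that the $AT$ iteration \eqref{eq:eq8} applied to $T$ converges strongly to this fixed point by Theorem 2.1, so the $AT$ algorithm furnishes a constructive route to the solution.

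I expect the main obstacle to be the derivative estimate: one must differentiate the integral operator up to order $n-1$, control every kernel derivative uniformly by $M$, and verify that the accumulated Lipschitz constants assemble precisely into $\sum_{i=1}^{n}\alpha_i M$ rather than picking up extra factors (such as the interval length $b-a$) that could spoil the bound $\alpha<1$. Making the bookkeeping between the $n$ derivative levels and the single sup-norm rigorous --- and reconciling the notational mismatch between the Lipschitz constants $a_i$ in \eqref{equation:18} and the coefficients $\alpha_i$ in \eqref{equation:21} --- is where the argument will need the most care.
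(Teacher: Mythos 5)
Your proposal follows essentially the same route as the paper: differentiate the integral operator $T$ up to order $n-1$, apply the Lipschitz condition \eqref{equation:18}, bound the kernel derivatives by $M_i\le M$ and the increments by $\|y-z\|_\infty$, conclude $\|Ty-Tz\|\le\alpha\|y-z\|$ with $\alpha<1$, and invoke the weak-contraction fixed point machinery (the paper cites the $AT$ iteration, you cite Theorem 1.1 plus Theorem 2.1 --- the same content). The two caveats you flag at the end are well taken: the paper's own proof silently drops the factor of $(b-a)$ coming from $\int_a^b ds$ and conflates the Lipschitz constants $a_i$ with the coefficients $\alpha_i$ of \eqref{equation:21}, so your version, which at least names these gaps, is if anything the more careful of the two.
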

	\begin{proof}
		
		\[
		\begin{aligned}
			\left| T \left(\frac{d^i}{dt^i} y(t)\right) - T \left( \frac{d^i}{dt^i}z(t) \right) \right|
			&= \left\lvert\int_{a}^{b} \frac{\partial^i g(t)}{\partial^i t} f\left(t, y, y^{\prime},\ldots,y^{n-1}\right) - f\left(t, z, z^{\prime},\ldots,z^{n-1}\right) \, ds\right\rvert \\
			&\leq \int_{a}^{b} \frac{\partial^i g(t)}{\partial^i t} \left| f\left(t, y, y^{\prime},\ldots,y^{n-1}\right) - f\left(t, z, z^{\prime},\ldots z^{n-1}\right)\right| \, ds \\
			&\leq \int_{a}^{b} \frac{\partial^i g(t)}{\partial^i t} \left( a_{1}\|y-z\| + a_{2}\left|y^{\prime}-z^{\prime}\right| \ldots+a_{n}\left|y^{n-1}-z^{n-1}\right| \right) \, ds \\
			&\leq \int_{a}^{b} \frac{\partial^i g(t)}{\partial^i t} \left( a_{1}\|y-z\| + a_{2}\|y-z\| \ldots +a_{n}\|y-z\| \right) \, ds \\
			&\leq \sum_{i=1}^{n} \alpha_i M_i \|y-z\|
		\end{aligned}
		\]
		
		\begin{equation}
			\begin{aligned}
				\|T(y)- T(z)\| &= \max \left\{ \sup_{t \in [a, b]} \lvert T(y(t)) - T(z(t)) \rvert, \ldots, \sup_{t \in [a, b]} \lvert T(y^{(n-1)}(t)) - T(z^{(n-1)}(t)) \rvert \right\}\\
				\leq \alpha \|y-z\|
			\end{aligned}
		\end{equation}
		We see that the mapping $T$ defined by \ref{equation:20} is a contraction and hence a weak contraction. So, using $AT$ iteration  $T$ has a unique fixed point in $X$. Therefore, the problem \eqref{17} has a unique solution in $X$.
			\end{proof}

	\section{Conclussion}
	We propose a  two-step iterative algorithm designed to approximate fixed points of weak contractions within a complete normed linear space. Our algorithm demonstrates enhanced efficiency and faster convergence compared to several established iterative methods, supported by the findings detailed in Theorem 2.3. Theorem 2.2 establishes the near R-stability of the $AT$ iterative algorithm . Additionally, by the $AT$ algorithm, we derived a data dependence result and provided an illustrative example to validate its credibility.\\
	We have two open questions that we can extend \\
	\textbf{Question 1} Is it possible to formulate a novel two-step iterative method that exhibits a faster convergence rate than the AT iterative algorithm?\\
	\textbf{Question 2} Does the sequence {sm} defined by the AT iterative algorithm converge towards a fixed point of either a contractive-like operator or a non-expansive operator?





\begin{thebibliography}{0}
\ifx \bisbn   \undefined \def \bisbn  #1{ISBN #1}\fi
\ifx \binits  \undefined \def \binits#1{#1}\fi
\ifx \bauthor  \undefined \def \bauthor#1{#1}\fi
\ifx \batitle  \undefined \def \batitle#1{#1}\fi
\ifx \bjtitle  \undefined \def \bjtitle#1{#1}\fi
\ifx \bvolume  \undefined \def \bvolume#1{\textbf{#1}}\fi
\ifx \byear  \undefined \def \byear#1{#1}\fi
\ifx \bissue  \undefined \def \bissue#1{#1}\fi
\ifx \bfpage  \undefined \def \bfpage#1{#1}\fi
\ifx \blpage  \undefined \def \blpage #1{#1}\fi
\ifx \burl  \undefined \def \burl#1{\textsf{#1}}\fi
\ifx \doiurl  \undefined \def \doiurl#1{\url{https://doi.org/#1}}\fi
\ifx \betal  \undefined \def \betal{\textit{et al.}}\fi
\ifx \binstitute  \undefined \def \binstitute#1{#1}\fi
\ifx \binstitutionaled  \undefined \def \binstitutionaled#1{#1}\fi
\ifx \bctitle  \undefined \def \bctitle#1{#1}\fi
\ifx \beditor  \undefined \def \beditor#1{#1}\fi
\ifx \bpublisher  \undefined \def \bpublisher#1{#1}\fi
\ifx \bbtitle  \undefined \def \bbtitle#1{#1}\fi
\ifx \bedition  \undefined \def \bedition#1{#1}\fi
\ifx \bseriesno  \undefined \def \bseriesno#1{#1}\fi
\ifx \blocation  \undefined \def \blocation#1{#1}\fi
\ifx \bsertitle  \undefined \def \bsertitle#1{#1}\fi
\ifx \bsnm \undefined \def \bsnm#1{#1}\fi
\ifx \bsuffix \undefined \def \bsuffix#1{#1}\fi
\ifx \bparticle \undefined \def \bparticle#1{#1}\fi
\ifx \barticle \undefined \def \barticle#1{#1}\fi
\bibcommenthead
\ifx \bconfdate \undefined \def \bconfdate #1{#1}\fi
\ifx \botherref \undefined \def \botherref #1{#1}\fi
\ifx \url \undefined \def \url#1{\textsf{#1}}\fi
\ifx \bchapter \undefined \def \bchapter#1{#1}\fi
\ifx \bbook \undefined \def \bbook#1{#1}\fi
\ifx \bcomment \undefined \def \bcomment#1{#1}\fi
\ifx \oauthor \undefined \def \oauthor#1{#1}\fi
\ifx \citeauthoryear \undefined \def \citeauthoryear#1{#1}\fi
\ifx \endbibitem  \undefined \def \endbibitem {}\fi
\ifx \bconflocation  \undefined \def \bconflocation#1{#1}\fi
\ifx \arxivurl  \undefined \def \arxivurl#1{\textsf{#1}}\fi
\csname PreBibitemsHook\endcsname

\end{thebibliography}


\begin{thebibliography}{33}
		
		
		
		
		
		\bibitem{bib1}
		Christopher Tisdell,
		\emph{On Picard's iteration method to solve differential equations and a pedagogical space for otherness},
		\emph{International Journal of Mathematical Education},
		\textbf{50}, 788-799 (2018),
		
		\bibitem{bib2}
		M.A. Krasnosel'skii,
		\emph{Two Remarks on the Method of Successive Approximations},
		\emph{Uspekhi Matematicheskikh Nauk},
		\textbf{10}, 123-127 (1955).
		
		\bibitem{bib3}
		W. Mann,
		\emph{Mean Value Methods in Iteration},
		\emph{Proceedings of the American Mathematical Society},
		\textbf{4}, 506-510 (1953).
		
		\bibitem{bib4}
		S. Ishikawa,
		\emph{Fixed points by a new iteration method},
		\emph{Proceedings of the American Mathematical Society},
		\textbf{44}(1),
		147--150,
		1974.
		
		\bibitem{bib5}
		RP Agarwal, Donal O'Regan, and DR Sahu,
		\emph{Iterative construction of fixed points of nearly asymptotically nonexpansive mappings},
		\emph{Journal of Nonlinear and Convex Analysis},
		\textbf{8}(1),
		61,
		2007,
		YOKOHAMA PUBLISHERS.
		
		\bibitem{bib6}
		DR Sahu,
		\emph{Applications of the S-iteration process to constrained minimization problems and split feasibility problems},
		\emph{Fixed Point Theory},
		\textbf{12}(1),
		187--204,
		2011.
		
		\bibitem{bib7}
		Wutiphol Sintunavarat and Ariana Pitea,
		\emph{On a new iteration scheme for numerical reckoning fixed points of Berinde mappings with convergence analysis},
		\emph{Journal of Nonlinear Science and Applications},
		\textbf{9}(5),
		2553--2562,
		2016.
		
		\bibitem{bib8}
		Faeem Ali and Javid Ali,
		\emph{Convergence, stability, and data dependence of a new iterative algorithm with an application},
		\emph{Computational and Applied Mathematics},
		\textbf{39}(4),
		267,
		2020,
		Springer.
		
		\bibitem{bib9}
		Vasile Berinde,
		\emph{On the approximation of fixed points of weak contractive mappings},
		\emph{Carpathian Journal of Mathematics},
		7--22,
		2003,
		JSTOR.
		
		\bibitem{bib10}
		AM Ostrowski,
		\emph{The Round-off Stability of Iterations},
		\emph{ZAMM - Journal of Applied Mathematics and Mechanics / Zeitschrift für Angewandte Mathematik und Mechanik},
		\textbf{47}(2),
		77--81,
		1967,
		Wiley Online Library.
		
		\bibitem{bib11}
		Vasile Berinde,
		\emph{Generalized contractions and applications},
		1997,
		Editura Cub Press Baia Mare, Romania.
		
		\bibitem{bib12}
		{\c{S}}M {\c{S}}oltuz and Teodor Grosan,
		\emph{Data dependence for Ishikawa iteration when dealing with contractive-like operators},
		\emph{Fixed Point Theory and Applications},
		\textbf{2008},
		1--7,
		2008,
		Springer.
		
		\bibitem{bib13}
		V. Berinde,
		\emph{Generalized Contractions and Applications},
		1997,
		Editura Cub Press 22,
		Baia Mare,
		Romanian.
		
		
		\bibitem{bib14}
		Safeer Hussain Khan,
		\emph{A Picard-Mann hybrid iterative process},
		\emph{Fixed Point Theory and Applications},
		\textbf{2013}(1),
		1--10,
		SpringerOpen (2013).
		
		\bibitem{bib15}
		Sintunavarat, W., \& Pitea, A. (2016).
		On a new iteration scheme for numerical reckoning fixed points of Berinde mappings with convergence analysis.
		\emph{Journal of Nonlinear Science and Applications}, 9(5), 2553–2562.\\
		
		\bibitem{bib16}
		S. Banach,
		\emph{Sur les opérations dans les ensembles abstraits et leurs applications},
		\emph{Fundamenta Mathematicae},
		\textbf{3},
		133--181,
		1922.
		
		
		
	\end{thebibliography}
\end{document}